\title{Permutation Patterns in Latin Squares}
\author
{Michael J. Earnest\\
Department of Mathematics\\
University of Southern California
\and
Samuel C. Gutekunst\\
Department of Mathematics\\
Harvey Mudd College}
\begin{document}
\def\ep{\varepsilon}
\def\aln{\alpha n}
\def\maln{(2-\alpha)n}
\def\lr{\left(}
\def\lc{\left\{}
\def\rc{\right\}}
\def\rr{\right)}
\def\nt{{n\choose2}}
\def\nc{\lceil n/2\rceil}
\def\ic{\lceil i/2\rceil}
\def\nr{{n\choose r}}
\def\ns{{n\choose s}}
\def\tv{{d_{{\rm TV}}}}
\def\P{{\rm Po}}
\def\cl{{\cal L}}
\def\ta{{\tilde{a}}}
\def\tb{{\tilde{b}}}
\def\qed{\vbox{\hrule\hbox{\vrule\kern3pt\vbox{\kern6pt}\kern3pt\vrule}\hrule}}
\def\ca{{\cal A}}
\def\vca{{\vert\cal A\vert}}
\def\ck{{\cal K}}
\def\ch{{\cal H}}
\def\cc{{\cal C}}
\def\cs{{\cal S}}
\def\p{\mathbb P}
\def\v{\mathbb V}
\def\e{\mathbb E}
\def\z{\mathbb Z}
\def\l{\lambda}
\def\a{\alpha}
\def\exp{\mathrm{exp}}
\def\n{\noindent}
\newtheorem{thm}{Theorem}
\newtheorem{lm}[thm]{Lemma}
\newtheorem{rem}[thm]{Remark}
\newtheorem{cor}[thm]{Corollary}
\newtheorem{exam}[thm]{Example}
\newtheorem{prop}[thm]{Proposition}
\newtheorem{defn}[thm]{Definition}
\newtheorem{cm}[thm]{Claim}
\newtheorem{conj}[thm]{Conjecture}
\maketitle

\begin{abstract}

In this paper we study pattern avoidance in Latin Squares, giving us a two dimensional analogue of the well-studied notion of pattern avoidance in permutations.  Our main results include enumerating and characterizing 
 the Latin Squares which avoid patterns of length three and a generalization of the Erd\H os-Szekeres theorem.  We also discuss equivalence classes among longer patterns, and conclude by describing open questions of interest both in light of pattern avoidance and their potential to reveal information about the structure of Latin Squares.  Along the way, we show that classical results need not generalize trivially, and we demonstrate techniques that may help answer future questions.

\end{abstract}
\section{Introduction}

A permutation of length $n$ is a rearrangement  of the numbers $\{1, 2, \ldots, n\}$ and many interesting questions have been asked and answered about the structure of permutation classes.  In particular, pattern containment and avoidance, which we will formally define shortly, ask about the types of subsequences a permutation does and does not have.  

A natural generalization of a permutation is a \emph{Latin Square}, which we will also introduce below.  Each row and column of a Latin Square is just a permutation, and so questions about patterns in permutations readily generalize to questions about patterns in Latin Squares.  Latin Squares are exciting objects to study by themselves, and we will begin this paper by formally defining pattern avoidance in Latin Squares. Little work seems to have been done in this area.

Sections $3$ through $5$ extend classical results from pattern avoidance in permutations.  We begin by enumerating and characterizing Latin squares that avoid patterns of length three.  In Section $4$ we discuss avoidance of longer patterns, and in Section $5$, discuss pattern containment.

Our final section, in our minds, is one of the most critical parts of this paper: a discussion of several open questions.  Answers to these questions will not only be exciting additions to the current results in the field of pattern avoidance, but they may also lay the foundation for answering several important questions about the structure and number of Latin Squares.  Most of the contents of this paper were presented at the $11^{\text{th}}$ Annual Permutation Patterns conference, and after the talk, many participants came up with additional questions.  We have added these to our list in Section $6$.

\section{Background}

We previously described a Latin Square as a set of permutations.  More concretely, an $n^{th}$ order Latin Square is an $n$ by $n$ grid in which the numbers $1, 2, ..., n$ (often called symbols) are each used exactly once in each row and column.  We readily know that there are $n!$ permutations of $n$ and so it comes as a surprise that the number of Latin Squares of order $n$ is only known up to $n=11$ \cite{McKay}. If we let $L_n$ be the number of $n^\text{th}$ order Latin Squares, then the best known bounds for $L_n$ are very far apart. For example, van Lint and Wilson \cite[p.~187]{VLW} give upper and lower bounds which differ asymptotically by a factor of $n^n$.

We can naturally extend the definition of pattern avoidance in permutations to pattern avoidance in Latin Squares.  This definition first requires an understanding of pattern containment in permutations: a permutation of size $n$ is said to \emph{ contain} a pattern (also a permutation) of size $k\leq n$ if a subsequence of the permutation is order isomorphic to the pattern.  If a permutation does not  contain a pattern, it is said to \emph{avoid} it.  For example, the permutation $13254$ contains $123$ because the subsequence $135$ is in the same relative order (that is, strictly increasing) as the pattern $123$.  Conversely, $13254$ avoids $321$ because it does not contain a strictly decreasing subsequence of length three.

To extend the preceding definition to the setting of Latin Squares, note that each row and each column of a Latin Square can be viewed as a permutation by reading the rows and the columns from left to right and top to bottom, respectively.  We  define a Latin Square's \emph{row permutations} to be the $n$ permutations corresponding to the rows in this manner and we define the \emph{column permutations} similarly.  Then:

\begin{defn}
A Latin Square avoids a pattern $\pi$ if all row and column permutations avoid $\pi$. The number of $n^\text{th}$ order Latin Squares avoiding $\pi$ will be denoted $L_n(\pi)$.
\footnote{While this definition is particularly natural, we note that there are other ways of defining pattern avoidance in Latin Squares. For example, each symbol $k$ of a Latin Square determines a permutation $\pi$, where $\pi(i)=j$ when the entry in row $i$, column $j$ is $k$. One could also require these symbol permutations to avoid a pattern in order to say that the full square does. This convention could make sense, and there is a way to view and define Latin Squares so that the distinction between rows, columns and symbols is arbitrary  (see Chapter 17 in \cite{VLW}). Furthermore, since Latin Squares are two dimensional, it might also make sense to study Latin Squares avoiding a ``two dimensional pattern.''  In this paper will study pattern avoidance using Definition 1, though we will discuss  these alternative definitions of pattern avoidance in Section 6.}
\end{defn}

The canonical question of pattern avoidance  has been, given some pattern $\pi$, how many permutations avoid (or equivalently, contain) $\pi$.  One of the earliest results was that the number of permutations of length $n$ avoiding any pattern of length three (e.g. any permutation of $\{1, 2, 3\}$) is just $\frac1{n+1}\binom{2n}{n}$, the $n^\text{th}$ Catalan number.  This result is proved in chapter 4 of \cite{Bona}.

One might suspect that if $\pi$ and $\pi'$ are patterns of the same length, then the same number of permutations will avoid them; however, this is not true in general. When, for any $n$, the number of permutations in $S_n$ avoiding $\pi$ and $\pi'$ are the same,  these patterns are said to be \emph{Wilf-equivalent}.  In addition to enumerating permutations avoiding a pattern, characterizing these equivalence classes is a central question in the study of permutation patterns.

\section{Avoiding Patterns of Length Three}  
One of the first results in classical pattern avoidance was the enumeration of permutations which avoid patterns of length three.  In this section we ask the same question for Latin Squares and will find the following result:

\begin{thm}
\label{thm:avoid}
For any $\pi\in S_3$, $L_n(\pi)=n.$
\end{thm}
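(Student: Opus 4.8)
The plan is to first reduce to a single representative pattern. Since a Latin Square is built from permutations in its rows and columns, and avoidance is required of all of them, I would observe that the reverse, complement, and inverse operations on patterns interact predictably with Latin Squares: reversing every row of a Latin Square is again a Latin Square, as is replacing every symbol $k$ by $n+1-k$, and transposing. These operations let me pass between $\pi$ and its symmetries, so it suffices to handle one pattern, say $\pi = 123$ (equivalently $321$ after complementing). Thus I reduce the theorem to: the number of $n^\text{th}$ order Latin Squares in which no row and no column contains an increasing subsequence of length three equals $n$.

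Next I would extract strong structural constraints from 123-avoidance. A permutation avoids $123$ iff it can be written as the union of two decreasing subsequences (by Dilworth / Erd\H os--Szekeres), but more useful here: a permutation of $[n]$ avoids $123$ and has a very rigid shape when $n$ is large relative to the ``width.'' The key point is that in a Latin Square \emph{every} row and \emph{every} column is 123-avoiding simultaneously, which is far more restrictive than a single permutation being 123-avoiding. I would look at where the symbol $n$ sits: in each row it occupies some column, and 123-avoidance of that row forces everything to the left of $n$ to be... — more precisely, I would track the positions of the two largest symbols, or equivalently analyze the first column. I expect to prove by induction on $n$ that, after the symmetry reductions, a 123-avoiding Latin Square is completely determined by a single parameter (for instance, the position of the symbol $1$ in the first row, or the value in the top-left cell), giving exactly $n$ choices, and that each choice extends to exactly one such square (likely a circulant-type or reverse-diagonal-band construction).

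The cleanest route is probably: (1) show that in a 123-avoiding Latin Square the first row must be $n, n-1, \dots$ arranged in one of only a few patterns, then bootstrap row by row; or (2) set up the induction by deleting a row and column (or the symbol $n$) and showing the result is again 123-avoiding of order $n-1$, then counting the ways to reinsert. I would then exhibit an explicit family of $n$ Latin Squares (e.g.\ the matrices $A_t$ with entry $A_t(i,j) \equiv t - i - j \pmod n$, or the ``anti-circulants,'' suitably indexed) and verify each is 123-avoiding, establishing $L_n(\pi) \ge n$; the structural argument gives the matching upper bound $L_n(\pi) \le n$.

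The main obstacle will be the upper bound: proving rigidity, i.e.\ that the 123-avoidance of all $2n$ line-permutations forces the square into the one-parameter family with no other freedom. The subtlety is that 123-avoidance of a single permutation is a weak condition, so the argument has to genuinely use the interaction between rows and columns (a cell lies in a row-constraint and a column-constraint at once). I anticipate the heart of the proof is a short combinatorial lemma pinning down, say, the relative order of the three largest symbols in each line, from which the global structure cascades by induction. Care is also needed that the symmetry reduction at the start is valid — i.e.\ that $L_n$ is genuinely constant on the symmetry class of each length-three pattern — but this is routine once the three operations above are checked to preserve the Latin Square property and to act on patterns as claimed.
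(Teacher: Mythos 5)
Your proposal correctly identifies the extremal family (the $n$ anti-circulants $A_t(i,j)\equiv t-i-j \pmod n$ are indeed $123$-avoiding in every row and column, since each line is a concatenation of two decreasing runs with the second run entirely above the first, giving $L_n(123)\ge n$), but two steps are genuinely missing. First, the symmetry reduction does not work as stated: under reverse, complement, and transpose, $S_3$ splits into two orbits, $\{123,321\}$ and $\{132,213,231,312\}$, and the inverse operation is not available here --- inverting the row permutations of a Latin Square is a conjugate operation that turns the columns into symbol-permutations, so it does not preserve the row-and-column notion of avoidance used in the theorem (and in any case $123^{-1}=123$ and $132^{-1}=132$, so inverse would not merge the two orbits anyway). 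You therefore still need a separate, if closely analogous, argument for the second orbit; the paper handles this by noting that the forcing argument below adapts with ``increasing'' in place of ``decreasing'' and the bottom row in place of the top.

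Second, and more seriously, the rigidity (upper bound) argument is left as an anticipated lemma rather than proved, and the lemma you anticipate --- pinning down the relative order of the three largest symbols in each line --- is not the one that works; your route (2), deleting a row and column or a symbol and inducting, fails outright because the result of such a deletion is not a Latin Square. The mechanism the paper actually uses is a direct column-by-column forcing. Fix the first row as an arbitrary $\sigma$. In the column whose top entry is $1$, any two entries below that $1$ in increasing order would complete a $123$ with it, so that column must be $1,n,n-1,\dots,2$; then in the column headed by $j$ the entries exceeding $j$ must decrease, and the already-determined columns force them into the bottom $n-j$ rows with $n$ in row $j+1$, which in turn forces the entries below $j$ to decrease as well. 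Hence the first row determines the entire square, giving $n!$ squares that avoid $123$ in the columns alone. Applying the same forcing to the rows shows the first row itself must have the cyclic form $i,i-1,\dots,1,n,\dots,i+1$ and is therefore determined by its leftmost entry, cutting $n!$ down to exactly $n$. Your route (1) gestures at this but in the wrong order: the first row is \emph{not} rigid under column-avoidance alone; it is the interaction with row-avoidance that produces the count $n$.
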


To prove this result, we begin by considering a less restrictive case: the number of Latin Squares avoiding a pattern in just the columns.  We can count these Latin Squares using the following proposition:

\begin{prop}
For any permutation $\sigma\in S_n$, there is exactly one Latin Square avoiding the pattern $123$ in the columns with $\sigma$ as its first row.
\end{prop}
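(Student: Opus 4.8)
The plan is to construct the unique $123$-avoiding Latin Square row by row, showing at each stage that the next row is forced. The key observation is a characterization of $123$-avoiding permutations: a permutation $\tau \in S_n$ avoids $123$ if and only if it can be partitioned into exactly two decreasing subsequences; more precisely, the values $1, 2, \ldots, n$ each appear, and reading $\tau$ from left to right, each new value either continues the current decreasing run or starts a new one, and there can be at most two such ``runs'' interleaved. Equivalently, a column vector $(c_1, \ldots, c_n)^T$ avoids $123$ iff for every $i$, $c_i$ is either smaller than all entries above it or is the smallest value not yet used that is larger than every previously-placed value. I would first state and prove this structural lemma about $123$-avoiding sequences, since it is the engine of the argument.

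Next I would set up the inductive construction. Suppose rows $1, \ldots, k$ have been filled in so that every column is $123$-avoiding so far and the Latin condition holds in each completed row. I claim row $k+1$ is uniquely determined. Consider building row $k+1$ from left to right: in column $j$, the entry must be a symbol not yet used in column $j$ (Latin column condition), not yet used in row $k+1$ (Latin row condition), and must preserve $123$-avoidance of column $j$ given the $k$ entries already above it. By the structural lemma, in each column there is at most one legal symbol once we track the ``two decreasing runs'' structure — specifically, the $123$-avoiding constraint on a column forces the new entry to be the smallest available symbol that is larger than the current running maximum of that column, unless that would violate the Latin row condition, in which case it must be forced to be a specific smaller value. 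The heart of the argument is checking that these column-by-column constraints are simultaneously satisfiable and leave no freedom: I would show that "the smallest unused symbol larger than the column max" rule, applied greedily, automatically produces a permutation in each row and never collides.

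The cleanest way to execute this is probably to give an explicit formula for the entry in row $i$, column $j$ of the square with first row $\sigma$, then verify (a) it is a Latin Square, (b) every column avoids $123$, and (c) any $123$-avoiding Latin Square with first row $\sigma$ must equal it. I expect the explicit square to be describable via: the columns are built so that each column, read top to bottom, consists of the first row's value followed by a pattern that ``cycles'' the remaining symbols in a prescribed decreasing-then-jump fashion determined by $\sigma$. The main obstacle will be step (c), the uniqueness: showing that at no point in the row-by-row (or entry-by-entry) construction is there a genuine choice. This requires carefully combining the column-avoidance constraint with the Latin row constraint to rule out all but one candidate, and handling the interaction between columns (an entry forced low in one column might seem to conflict with the row being a permutation). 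I would handle this by proving a stronger inductive invariant — e.g., that after $k$ rows, the set of symbols ``still available and above their column's running max'' in the various columns forms a structured family (an interval or a union of two intervals) that forces the next row — rather than arguing column-by-column in isolation.
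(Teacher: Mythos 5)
There are genuine gaps here, both in the stated lemma and in the engine of the uniqueness argument. First, your ``equivalent'' characterization of $123$-avoiding columns is false as stated: in the sequence $3,1,2$ the final entry $2$ is neither smaller than all entries above it nor the smallest unused value exceeding every previously placed value, yet $3,1,2$ avoids $123$. (The correct statement is that the entries which are not left-to-right minima must themselves form a decreasing sequence.) Second, the greedy rule you propose to force each new row --- ``the smallest available symbol larger than the current running maximum of the column'' --- does not describe the actual square. Take $n=4$ with first row $2,1,3,4$: the unique $123$-column-avoiding square has second row $1,4,2,3$, so in column $1$ the forced entry is $1$, not the smallest unused symbol above $2$. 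Your escape clause (``unless that would violate the Latin row condition'') would have to fire in almost every cell, and you give no rule for what it forces instead; that unstated rule is precisely the content of the proposition.

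The deeper issue is where the forcing actually comes from. Requiring only that each column ``preserve $123$-avoidance given the entries above it'' leaves many legal choices: after first row $2,1,3,4$, placing $2$ below the $1$ in column $2$ keeps that prefix $123$-avoiding, but the column can then never be completed, since no symbol larger than $2$ may appear below. So uniqueness must be argued from \emph{completability} of each column to a full $123$-avoiding permutation of the remaining symbols, not from prefix avoidance, and your proposal defers exactly this step (``I would handle this by proving a stronger inductive invariant'') without supplying it. The paper sidesteps all of this by working column by column in order of the leading symbol: in the column headed by $j$, every entry exceeding $j$ must lie below every other such entry in decreasing order (else it forms a $123$ with the $j$ on top), the previously completed columns pin $n$ out of the first $j$ rows, which pushes all symbols greater than $j$ into the bottom $n-j$ rows, and the symbols $1,\dots,j-1$ must then decrease to avoid a $123$ completed by $n$. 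Each column is thereby determined outright, with no appeal to a row-by-row greedy rule. If you want to salvage a row-by-row argument, you would need to prove that each partial column extends to a $123$-avoiding permutation of its unused symbols in exactly one way compatible with the Latin row condition, which essentially reduces to the column-by-column analysis anyway.
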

\begin{proof} 
Suppose that the top row has been fixed as $\sigma$ and consider the column beginning with a $1$. The rest of the entries must be in decreasing order: if there were any two in increasing order, they would form a 123 pattern with the 1 in the top row. Thus, the only possible permutation for this column is \\ $1,n,(n-1),\ldots,3,2$.

The column whose first entry is $2$ must similarly be completed as \\ $2, 1, n,(n-1),\ldots, 3$ in a decreasing order.  This claim follows because the numbers $3$ through  $n$ must be in strictly decreasing order, and to avoid conflict with our $1^{st}$ column, $n$ cannot be in the second row. 


Now proceed iteratively.  To fill out the column beginning with $j$, for $j<n$,  all elements greater than $j$ must be in a decreasing order, and $n$ cannot be placed in the first $j$ rows.  The elements greater than $j$ are then forced to be placed in the bottom $n-j$ rows.  To complete the column, the remaining numbers $2,\dots, j-1$ must be in decreasing order to avoid forming a $123$ pattern with $n$. Figure \ref{proof} shows an illustration of this process when $n=4$. \begin{figure}[h]
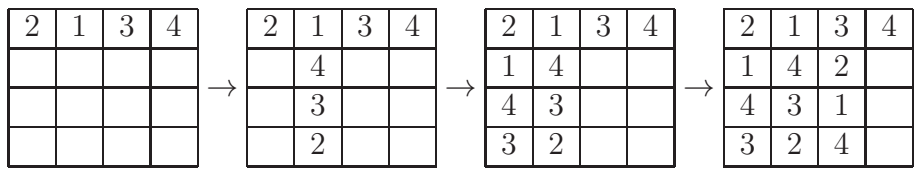
 
$$
\begin{tabular}{|c|c|c|c|}\hline
2&1&3&4\\\hline
&&&\\\hline
&&&\\\hline
&&&\\\hline
\end{tabular}
\to
\begin{tabular}{|c|c|c|c|}
\hline
2&1&3&4\\\hline
&4&&\\\hline
&3&&\\\hline
&2&&\\\hline
\end{tabular}
\to
\begin{tabular}{|c|c|c|c|}
\hline
2&1&3&4\\\hline
1&4&&\\\hline
4&3&&\\\hline
3&2&&\\\hline
\end{tabular}
\to
\begin{tabular}{|c|c|c|c|}
\hline
2&1&3&4\\\hline
1&4&2&\\\hline
4&3&1&\\\hline
3&2&4&\\\hline
\end{tabular}$$
\caption{Proof method of Proposition 2.}\label{proof}
\end{figure}

This leaves one unfilled column, which can only be completed one way to avoid repeats in the rows. This method will construct a unique Latin Square avoiding $123$ in the columns with $\sigma$ as its top row. \end{proof} 

Our proof readily generalizes for any of the other five permutations of length three.  To avoid $132$, first consider the $1$ in the top row and place the remaining elements in an increasing order.  To avoid $312$ or $321$, act similarly but first consider the $n$ in the first row.  To avoid $231$ and $213$, instead begin with the bottom row.

Because one row effectively determines a unique Latin Squares avoiding a pattern of length three in the columns, we obtain the following corollary. 

\begin{cor}
The number of $n^{th}$ order Latin Squares avoiding a pattern of length three in just the columns (or rows) is n!
\end{cor}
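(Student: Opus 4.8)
The plan is to observe that this statement is essentially a repackaging of Proposition~2 together with the remark immediately preceding it. First I would show that, for each of the patterns $123$, $132$, $312$, $321$, the map sending an $n^{\text{th}}$ order Latin Square that avoids the pattern in its columns to its \emph{first} row is a bijection onto $S_n$. Injectivity holds because Proposition~2 (and its stated analogues for $132$, $312$, $321$) shows the entire square is reconstructed uniquely from its top row; surjectivity holds because, given any $\sigma\in S_n$, the construction in the proof produces a square with $\sigma$ on top. Hence the set of such Latin Squares is in bijection with $S_n$ and therefore has exactly $n!$ elements.

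For the two remaining patterns, $213$ and $231$, the analogous construction (sketched in the paragraph between Proposition~2 and this statement) is anchored on the \emph{last} row rather than the first. The same bijection argument then applies, with the last row playing the role of $\sigma$, and there are again $n!$ choices for it. This handles avoidance in the columns for all six $\pi\in S_3$.

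To pass from columns to rows I would use transposition: a Latin Square $L$ avoids $\pi$ in its rows if and only if its transpose $L^{T}$ avoids $\pi$ in its columns, and $L\mapsto L^{T}$ is a bijection on the set of $n^{\text{th}}$ order Latin Squares. Consequently the number avoiding $\pi$ in the rows equals the number avoiding it in the columns, namely $n!$, completing the proof.

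There is no genuinely hard step here; the only point needing care is confirming that Proposition~2 really does generalize to the other five length-three patterns --- that is, that in each case a single row (top or bottom) forces the rest of the square uniquely --- which is exactly the content of the remark preceding the corollary. One should also note the trivial fact that the anchoring maps above are onto, i.e.\ that every permutation occurs as the top (or bottom) row of some avoiding square, which is immediate from the explicit construction.
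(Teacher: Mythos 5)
Your proposal is correct and follows essentially the same route as the paper: the corollary is read off from Proposition~2 and its stated analogues, since each of the $n!$ choices of anchoring row determines exactly one column-avoiding square. The transposition argument you add for the row case is a clean way to make explicit the symmetry the paper leaves implicit.
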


The above work also reveals a very interesting structure for pattern avoidance in the columns:
\begin{rem}
In a Latin Square avoiding $123$, $231$ or $312$, each entry is one less than the one above it (mod $n$). Thus, all columns are of the form $i, i-1, \ldots, 1, n, \ldots, i+1.$  When avoiding $132$, $213$, or $321$, all columns are instead increasing and of the form $i, i+1, \ldots, n, 1, \ldots, i-1.$ 

\end{rem}
 Note that these results pertain, respectively, to the even and odd permutations of $S_3$.   As above, this remark applies similarly to the rows.

We are now ready for the proof of Theorem 2.

\begin{proof} 

Let $\pi$ be a permutation in  $S_3$. To construct a Latin Square avoiding $\pi$, we have $n$ choices for which number is placed in the top left box of our Latin Square.  Using the above remark, there is exactly one way to complete this row 
(as $i, i-1, \ldots, 1, n, \ldots, i+1$ if $\pi$ is even, or $i, i+1, \ldots, n, 1, \ldots, i-1$, if $\pi$ is odd).  We now have one number in each column, which from Proposition 3 shows there is only one way to complete each column.

Since each of the $n$ choices for where this first element can go produces exactly one Latin Square avoiding $\pi$, this completes our proof.
\end{proof}

\begin{rem}
The $n$ $n^{\text{th}}$ order Latin Squares which avoid any particular pattern of length three have a structure that is worth noting. The Latin Squares which avoid $123, 231$, and $312$ are of the form shown in Figure \ref{123}, and the Latin Squares which avoid $132, 213$, and $321$ are of the form shown in Figure \ref{321}.

\begin{figure}[h]
\[
\begin{array}{|c|c|c|c|c|c|c|c|c|c|}
\hline
i&i-1&&&1&n&&&i+2&i+1\\\hline
i-1&\ddots&&1&n&&&i+2&i+1&i\\\hline
&&1&n&&&i+2&i+1&i&\\\hline
&1&n&\ddots&&i+2&i+1&i&&\\\hline
1&n&&&i+2&i+1&i&&&\\\hline
n&&&i+2&i+1&i&&&&1\\\hline
&&i+2&i+1&i&&\ddots&&1&n\\\hline
&i+2&i+1&i&&&&1&n&\\\hline
i+2&i+1&i&&&&1&n&\ddots&\\\hline
i+1&i&&&&1&n&&&i+2\\\hline
\end{array}
\]
\caption{ The general form of a 123, 231, or 312 avoiding Latin Square.}\label{123}
\end{figure}

\begin{figure}[h!]
\[
\begin{array}{|c|c|c|c|c|c|c|c|c|c|}
\hline
i&i+1&&&n&1&&&i-2&i-1\\\hline
i+1&\ddots&&n&1&&&i-2&i-1&i\\\hline
&&n&1&&&i-2&i-1&i&\\\hline
&n&1&\ddots&&i-2&i-1&i&&\\\hline
n&1&&&i-2&i-1&i&&&\\\hline
1&&&i-2&i-1&i&&&&n\\\hline
&&i-2&i-1&i&&\ddots&&n&1\\\hline
&i-2&i-1&i&&&&n&1&\\\hline
i-2&i-1&i&&&&n&1&\ddots&\\\hline
i-1&i&&&&n&1&&&i-2\\\hline
\end{array}
\]
\caption{ The general form of a 132, 213, or 321 avoiding Latin Square.}\label{321}
\end{figure}
\end{rem}

Every row and column  is in a cyclic increasing or decreasing structure where adjacent elements differ by one (mod $n$).  In addition, we earlier saw that there were $n!$ Latin Squares avoiding a pattern of length three in just the columns, and $n!$ avoiding it in just the rows.  When we force both restrictions we find that only $n$ Latin Squares satisfy both avoidance criteria.

By examining the above Latin Squares, we can also see the following corollary:

\begin{cor}
A Latin Square contains either all of the patterns $\{123,231,312\}$, or none of them. This also holds for $\{132,213,321\}$. 
\end{cor}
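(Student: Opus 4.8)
The plan is to prove the contrapositive in the form: if a Latin Square $S$ contains at least one of $\{123, 231, 312\}$, then it contains all three; and symmetrically for $\{132, 213, 321\}$. I would first observe that the three patterns in each triple are precisely the three cyclic rotations of one another (equivalently, in each triple the patterns are related by the group of rotations of the values $\{1,2,3\}$, i.e.\ $123 \to 231 \to 312$ by adding $1$ mod $3$). So the natural strategy is: given an occurrence of one pattern in a row or column of $S$, use the rigid cyclic structure of the \emph{other} rows/columns of $S$ to manufacture occurrences of the other two.

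The cleanest route, though, is to argue via the characterization theorems already proved. By Theorem~\ref{thm:avoid} and the Remarks preceding it, a Latin Square \emph{avoids} $123$ if and only if every column has the cyclic-decreasing form $i, i-1, \ldots, 1, n, \ldots, i+1$ and every row likewise; and crucially, this same structural condition is exactly the one that characterizes avoidance of $231$ and of $312$ (the Remark after Proposition~3 states all three even patterns force the identical ``each entry is one less than the one above, mod $n$'' structure, and the analogous row statement). So the set of Latin Squares avoiding $123$, the set avoiding $231$, and the set avoiding $312$ are \emph{literally the same set} — the $n$ squares depicted in Figure~\ref{123}. Taking complements within the set of all $n$th order Latin Squares: the squares containing $123$, those containing $231$, and those containing $312$ coincide. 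Hence a square contains one of the three iff it contains all three. The identical argument with Figure~\ref{321} and the odd patterns $\{132, 213, 321\}$ gives the second claim.

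The one gap to fill carefully is that the Remarks as stated assert the structure for squares avoiding these patterns \emph{in the columns} (and separately in the rows), and Theorem~2's proof shows that imposing avoidance in both rows and columns cuts the count down to exactly $n$. I would spell out that avoiding $\pi \in S_3$ as a Latin-Square pattern means avoiding it in every row \emph{and} every column, so the avoidance set for each $\pi$ is the intersection of its row-avoidance set and its column-avoidance set; and since the Remark gives the \emph{same} column-structure for $123, 231, 312$ and the \emph{same} row-structure for them, these three intersections are equal. This is essentially bookkeeping on top of what is already established.

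I expect the main (minor) obstacle to be purely expository: making sure the reader sees that ``avoids $123$,'' ``avoids $231$,'' and ``avoids $312$'' define \emph{coincident} families of Latin Squares rather than merely equinumerous ones — the stronger statement is what the Remark delivers and what makes the corollary fall out for free. An alternative, more self-contained proof would construct the rotations explicitly: given a $123$-occurrence at positions $(r, c_1), (r, c_2), (r, c_3)$ in a row with values $a<b<c$, locate the rows where these three columns carry the cyclically shifted values and exhibit a $231$ and a $312$ there; but this duplicates work already done in proving the structure theorems, so I would present the short argument via the characterization.
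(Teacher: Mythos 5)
Your proof is correct and takes essentially the same route as the paper: the paper likewise deduces the corollary from the structural characterization, observing that a Latin Square avoiding any one of $\{123,231,312\}$ must have the cyclic-decreasing form of Figure~\ref{123} and hence avoids the other two (and dually for $\{132,213,321\}$). Your explicit bookkeeping that the row- and column-avoidance sets coincide across the three patterns, so the full avoidance sets are literally equal, is just a more careful spelling-out of the step the paper leaves implicit.
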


Given the previous result, the proof of this corollary is straightforward. If a Latin Square does not contain any of $\{123,231,312\}$, it must be in the decreasing form shown above and it will not contain the others. However, out of context, it is somewhat surprising that any Latin Square with three terms in a row or column in increasing order must have three terms in the relative order 231 and 312.

 We have now seen that all patterns of length three are also Wilf-equivalent for Latin Squares and that the growth rate of the number of these Latin Squares is polynomial as opposed to exponential (as is the case for permutations).

\section{Avoidance of Larger Patterns}

Computing $L_n(\pi)$ for a general pattern $\pi$ of length greater than three is considerably more difficult. As of yet, we know of no simple algorithm for filling in a partially completed Latin Square so that it will avoid a permutation in $S_4$. We begin this section with a much more tractable question: counting $L_n(\pi)$, for $\pi\in S_n$, in terms of the total number of Latin Squares.

\begin{thm}
For any $\pi\in S_n$, $L_n(\pi)=\lr\frac{n!-n}{n!}\rr^2L_n.$
\end{thm}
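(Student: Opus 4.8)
The key observation is that for $\pi \in S_n$, a Latin Square of order $n$ contains $\pi$ if and only if one of its $n$ rows or one of its $n$ columns \emph{equals} $\pi$ as a permutation (a subsequence of length $n$ inside a permutation of $\{1,\dots,n\}$ is the whole thing). So $L_n$ minus $L_n(\pi)$ counts Latin Squares in which at least one row or column is literally $\pi$. The plan is to set up an inclusion--exclusion / direct counting argument over which rows and columns equal $\pi$, but the cleanest route is to count the \emph{complement} directly: the probability that a uniformly random Latin Square of order $n$ avoids $\pi$ should turn out to be $\lr\frac{n!-n}{n!}\rr^2$, which strongly suggests an ``independence'' phenomenon between the event ``no row equals $\pi$'' and the event ``no column equals $\pi$,'' each of which should have probability $\frac{n!-n}{n!} = 1 - \frac1{(n-1)!}$.

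First I would make precise the count ``number of order-$n$ Latin Squares with no row equal to $\pi$.'' Here one can use the group structure: relabelling the symbols by the permutation $\tau$ sends a Latin Square $L$ to $\tau \circ L$ (apply $\tau$ entrywise), and this is a free action of $S_n$ on the set of all $L_n$ Latin Squares only in a weak sense, but the relevant fact is that a row of $L$ equals $\pi$ iff the corresponding row of $\pi^{-1}\circ L$ equals the identity. So it suffices to count Latin Squares having a fixed row (say, the first) equal to the identity, i.e.\ \emph{reduced-in-the-first-row} Latin Squares, of which there are $L_n/(n!)$; more generally the number with \emph{some} row equal to a fixed permutation. I would then argue, via a symmetry/averaging argument over the $n!$ choices of target permutation for a given row, that the fraction of Latin Squares with a specified row equal to a specified $\pi$ is exactly $1/n!$, and that the events ``row $i$ equals $\pi$'' for different $i$ are mutually exclusive (two distinct rows of a Latin Square cannot be equal), so the fraction with \emph{no} row equal to $\pi$ is $1 - n \cdot \frac1{n!} = \frac{n!-n}{n!}$. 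The same argument applied to columns gives the matching factor.

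The main obstacle — and the step that needs genuine care rather than a one-line symmetry remark — is establishing the \textbf{independence} of the row-event and the column-event: that the number of Latin Squares with no row equal to $\pi$ \emph{and} no column equal to $\pi$ is $\lr\frac{n!-n}{n!}\rr^2 L_n$ rather than merely being bounded by the product. I would handle this by a transpose-and-relabel symmetry argument: transposition is an involution on Latin Squares that swaps the role of rows and columns, and combined with symbol-relabelling one can show that for \emph{any} fixed permutations $\rho$ (a target for some row) and $\gamma$ (a target for some column), the proportion of Latin Squares in which a designated row equals $\rho$ and a designated column equals $\gamma$ is $\frac{1}{n!}\cdot\frac{1}{n!}$ \emph{provided} $\rho$ and $\gamma$ are consistent at their crossing cell — and then summing/averaging the joint count over all admissible $(\rho,\gamma)$, using that the constraints ``row $i = \pi$'' and ``column $j=\pi$'' for $i\ne j$ and compatible crossing values decouple, yields the clean product. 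Alternatively, and perhaps more robustly, I would run a two-step inclusion--exclusion: write $L_n(\pi) = \sum_{A,B}(-1)^{|A|+|B|} N(A,B)$ where $A$ ranges over sets of rows forced to equal $\pi$ and $B$ over sets of columns, note $N(A,B)=0$ unless $|A|,|B|\le 1$ (distinct rows, or distinct columns, can't both be $\pi$), so only the terms with $|A|,|B|\in\{0,1\}$ survive, and then check $N(\{i\},\{j\}) = L_n/(n!)^2$ for all $i,j$ using the symmetry above. That collapses the sum to $L_n\,(1 - \tfrac{n}{n!})^2$, completing the proof.
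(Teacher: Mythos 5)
Your overall strategy is sound and genuinely different from the paper's. You reduce full-length containment to literal equality of a row or column with $\pi$ and then run inclusion--exclusion over which rows and columns equal $\pi$; the paper instead works in two stages by orbit counting, first partitioning all Latin Squares into classes of size $n!$ under row permutation and showing each class contains exactly $n$ squares having $\pi$ as a column (giving one factor of $\frac{n!-n}{n!}$), and then partitioning the column-avoiding squares into classes of size $n!$ under column permutation, which preserves column-avoidance, to extract the second factor. That two-stage device is exactly what lets the paper sidestep the joint row/column count that you correctly single out as the crux.

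Your resolution of that crux, however, contains a genuine error. The claim that $N(\{i\},\{j\})=L_n/(n!)^2$ for all $i,j$ is false. If $i\ne j$, then ``row $i=\pi$'' and ``column $j=\pi$'' force the entry in cell $(i,j)$ to equal both $\pi(j)$ and $\pi(i)$, which is impossible, so $N(\{i\},\{j\})=0$. If $i=j$, then relabelling symbols by $\pi^{-1}$ (and permuting rows and columns to move row $i$ and column $i$ to the first position) identifies $N(\{i\},\{i\})$ with the number of reduced Latin Squares, namely $L_n/(n!\,(n-1)!)=nL_n/(n!)^2$, which exceeds your claimed value by a factor of $n$. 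The same factor-of-$n$ slip appears in your transpose-and-relabel variant: a consistent pair $(\rho,\gamma)$ occurs with proportion $1/(n!\,(n-1)!)$, not $1/(n!)^2$. The theorem survives only because the totals happen to agree: $\sum_{i,j}N(\{i\},\{j\})=n\cdot nL_n/(n!)^2=n^2L_n/(n!)^2$, which is precisely the cross term needed to collapse the inclusion--exclusion to $L_n(1-n/n!)^2$. So the argument is repairable by replacing the false per-term claim with the dichotomy above, but as written the key step would not withstand checking, and in particular the ``independence'' of the row and column events holds only in aggregate, not term by term.
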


\begin{proof}
Let $\pi,\rho$ be permutations in $S_n$. Given a $\pi$-avoiding  $n^{th}$ order Latin Square, apply the permutation $\rho\circ\pi^{-1}$ to each entry. Doing so will create a bijection from Latin Squares which avoid $\pi$ to those which avoid $\rho$, so that $L_n(\pi)$ for $\pi\in S_n$ only depends on $n$. We first count Latin Squares avoiding any $\pi\in S_n$ in the columns. Let the number of these be  $\ell_n(\pi)$. 

Let two Latin Squares be r-equivalent if they are related by a permutation of rows. Each r-equivalence class will be of size $n!$. Let the $i^\text{th}$ column of a Latin Square, $S$, be $\sigma_i$. The $n$ permutations $\pi\circ\sigma_i^{-1}$, for $1\le i\le n$, are the only ones which, when applied to the rows of $S$, cause the result to contain $\pi$ in a column. Thus, each r-equivalence class will contain $n!-n$ Latin Squares which avoid $\pi$ in the columns, so that $\ell_n(\pi)=\frac{n!-n}{n!}\cdot L_n$. By similar logic, if we partition Latin Squares which column-avoid $\pi$ into c-equivalence classes up to permutation of columns, each c-equivalence class of size $n!$ will contain $n!-n$ Latin Squares which avoid $\pi$ in the rows and columns. Thus, we have $$L_n(\pi)=\frac{n!-n}{n!}\cdot\ell_n(\pi)=\lr\frac{n!-n}{n!}\rr^2L_n.$$  \end{proof}

As noted earlier, it is not generally true that $L_n(\pi)$ = $L_n (\pi')$ when $\pi$ and $\pi'$ are patterns of the same length. We say that $\pi$ and $\pi'$ are Wilf-equivalent in Latin Squares when $L_n(\pi)=L_n(\pi')$ for all $n$.

In classical pattern avoidance, Wilf-equivalence classes have rich structure. Let the $complement$ of a permutation, $\pi^c$, be given by $\pi^c(i)=(n+1)-\pi(i)$, and the reverse, $\pi^\text{rev}$, by $\pi^\text{rev}(i)=\pi(n+1-i)$. It is easy to show $\pi$ is Wilf-equivalent to its complement and reverse in permutations, and a quick proof shows $\pi$ is similarly equivalent to its $inverse$ (which satisfies $\pi^{-1}(\pi(i))=i$). 

There are many other Wilf-equivalences in permutations; for example, $S_n(4132)=S_n(3142)$, as shown in \cite{Stankova} (where $S_n(\pi)$ denotes the number of permutations of length $n$ avoiding a pattern $\pi$). Nontrivial equivalences exist for arbitrarily large patterns. For $\pi_1\in S_n$ and $\pi_2\in S_m$, let $\pi_1\oplus \pi_2$ be the permutation in $S_{n+m}$, given by applying $\pi_1$ to $\{1,\dots,n\}$ and $\pi_2$ to $\{n+1,\dots,n+m\}$. Then \cite{BWX} shows that for any pattern $\pi$, $S_n(12\dots k\oplus \pi)=S_n(k\dots21\oplus \pi)$. Combined, these results can be used to show that there are only three Wilf classes in $S_4$ \cite{Bona}.

For Latin Squares, we still have equivalence under reverse and complement. 
\begin{thm}\label{comprev}
$L_n(\pi) = L_n(\pi^\text{rev}) = L_n(\pi^c)$ where $\pi^\text{rev}$ is the reverse of $\pi$ and $\pi^c$ is the complement of $\pi$.

\end{thm}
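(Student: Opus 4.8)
The plan is to exhibit explicit bijections on the set of $n^\text{th}$ order Latin Squares that carry $\pi$-avoiding squares to $\pi^\text{rev}$-avoiding squares, and likewise for $\pi^c$. The guiding principle is that reversing a pattern corresponds to a geometric reflection of the square, while complementing a pattern corresponds to relabeling the symbols. Concretely, for the reverse I would define $\phi$ to be the map that flips each row left-to-right, i.e.\ sends the entry in row $i$, column $j$ to position (row $i$, column $n+1-j$). This is clearly an involution on the set of all Latin Squares, since row $i$ remains a permutation and the column-$j$ content after flipping is the old column-$(n+1-j)$ content, still a permutation. For the complement I would define $\psi$ to act on entries by $k\mapsto n+1-k$, fixing all positions; this too is an involution on Latin Squares, since applying an order-reversing bijection to every symbol preserves the Latin property in each row and column.

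Next I would check that these maps interact with pattern containment exactly as desired. For $\phi$: a row permutation $\rho$ of the original square becomes $\rho^\text{rev}$ as a row permutation of $\phi(S)$, because reading the flipped row left-to-right is the same as reading the original row right-to-left; a column permutation is unchanged as a sequence up to which column it sits in (the column-$j$ permutation of $\phi(S)$ equals the column-$(n+1-j)$ permutation of $S$), so the multiset of column permutations is preserved. Since a permutation contains $\tau$ iff its reverse contains $\tau^\text{rev}$, the square $S$ avoids $\pi$ in all rows and columns iff $\phi(S)$ avoids $\pi^\text{rev}$ in all rows and columns. Hence $\phi$ restricts to a bijection between $\pi$-avoiding and $\pi^\text{rev}$-avoiding squares, giving $L_n(\pi)=L_n(\pi^\text{rev})$. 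For $\psi$: applying $k\mapsto n+1-k$ to a permutation $\rho$ (as a sequence of values) produces exactly the complement $\rho^c$, in every row and every column simultaneously; and $\rho$ contains $\tau$ iff $\rho^c$ contains $\tau^c$. Therefore $S$ is $\pi$-avoiding iff $\psi(S)$ is $\pi^c$-avoiding, so $\psi$ is the required bijection and $L_n(\pi)=L_n(\pi^c)$.

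There is essentially no hard step here; the only thing requiring care is bookkeeping the direction of the equivalences. The one point worth stating cleanly is the elementary fact that for a permutation $\rho$ and a pattern $\tau$, ``$\rho$ contains $\tau$'' is equivalent to ``$\rho^\text{rev}$ contains $\tau^\text{rev}$'' and to ``$\rho^c$ contains $\tau^c$'', which is immediate from the definition of order-isomorphism applied to a subsequence together with the observation that reversing index order (resp.\ reversing value order) commutes with passing to a subsequence. I would also remark that, by contrast with the classical permutation setting, we cannot hope to add inverse-equivalence to this list, because the transpose operation that would implement it turns rows into columns and columns into rows but also inverts each of those permutations, and $\pi^{-1}$ need not equal $\pi$; more pointedly, Definition~1 already treats rows and columns symmetrically, so transposing an $n^\text{th}$ order Latin Square gives a bijection from $\pi$-avoiding squares to $\pi^{-1}$-avoiding squares, which in fact does yield $L_n(\pi)=L_n(\pi^{-1})$ — this is a short addendum worth noting, though it is not part of the stated theorem.
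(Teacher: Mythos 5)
Your complement bijection ($k\mapsto n+1-k$, positions fixed) is exactly the paper's map and is correct. The reverse half, however, has a genuine error. Your horizontal flip $(i,j)\mapsto(i,n+1-j)$ reverses every row, but — as you yourself note — it leaves each column permutation unchanged as a top-to-bottom sequence (column $j$ of $\phi(S)$ is column $n+1-j$ of $S$ in the same order). So for $\phi(S)$ to lie in $\mathcal{L}_n(\pi^\text{rev})$ you would need the columns of $S$ to avoid $\pi^\text{rev}$, whereas $S\in\mathcal{L}_n(\pi)$ only guarantees that they avoid $\pi$. Avoiding $\pi$ does not imply avoiding $\pi^\text{rev}$: a square whose rows and columns all avoid $1234$ can perfectly well have a column containing $4321$, and its horizontal flip then still contains $4321=\pi^\text{rev}$ in that column. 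The step ``the square $S$ avoids $\pi$ in all rows and columns iff $\phi(S)$ avoids $\pi^\text{rev}$ in all rows and columns'' is therefore false as applied to columns. The fix is to reverse the columns too: compose your left-right flip with the top-to-bottom flip, i.e.\ rotate the square $180^\circ$ about its center. That is precisely the map the paper uses; it reverses every row and every column simultaneously, after which your row argument applies verbatim to the columns as well.

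Your closing addendum about inverses is also incorrect. Transposition moves the entry at $(i,j)$ to $(j,i)$, so row $i$ of $S^T$ read left to right is column $i$ of $S$ read top to bottom — no inversion occurs. Since Definition 1 treats rows and columns symmetrically, transposition is simply a bijection from $\mathcal{L}_n(\pi)$ to itself and says nothing about $\pi^{-1}$. Indeed the paper reports (from Daly's computation of $L_5(\pi)$ for $\pi\in S_4$) that the only Wilf-equivalences among length-four patterns are the reverse and complement ones of this theorem; since, e.g., $1423=(1342)^{-1}$ is not the reverse, complement, or reverse-complement of $1342$, the claim $L_n(\pi)=L_n(\pi^{-1})$ fails in general for Latin Squares.
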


\begin{proof}
Let $\mathcal{L}_n(\pi)$ denote the set of $n^\text{th}$ order Latin Squares which avoid $\pi$. Given a Latin Square $S\in\mathcal{L}_n(\pi)$, let $\phi(S)$ be the new Latin Square when each entry $i$ is replaced with $n+1-i$. Since an occurrence of $\pi$ in $S$ would cause $\phi(S)$ to contain $\pi^c$, we then have that $\phi$ is a bijection from $\mathcal{L}_n(\pi)$ to $\mathcal{L}_n(\pi^c)$. To prove that $L_n(\pi)=L_n(\pi^\text{rev})$, we define the mapping $\rho$ by rotating the Latin Square $180^\circ$ around its center.
This has the effect of reversing all rows and columns, so this will be a bijection from $\mathcal{L}_n(\pi)$ to $\mathcal{L}_n(\pi^\text{rev})$.
\end{proof}

However, all of the other equivalences for patterns of length four do not carry over to Latin Squares. Dan Daly calculated $L_5(\pi)$ for every $\pi\in S_4$ (personal communication, July 12, 2012), using methods in \cite{McKay}, and the only equivalences that existed were the ones proved in Theorem \ref{comprev}. These  data show that, for avoidance in Latin Squares, there are eight Wilf classes in $S_4$ as opposed to the three in the case of permutations. This illustrates how pattern avoidance in Latin Squares is more nuanced and difficult than it is for permutations. Whether or not any nontrivial Wilf-equivalences exist for Latin Squares is an open question.

\section{Monotone Subsequences}
The celebrated theorem of Erd\H os and Szekeres states that every permutation of length $pq+1$ contains an increasing subsequence of length $p+1$ or a decreasing subsequence of length $q+1$. In the special case where $p=q$, we have that length $n^2+1$ permutations contain a $monotone$ (i.e. strictly increasing or strictly decreasing) subsequence of length $n+1$ \cite{Erdos}. In addition, this is the longest possible monotone sequence whose existence is guaranteed, since for every $m<n^2+1$, there exists permutations of length $m$ which have no monotone subsequence of length $n+1$. This result can be rephrased as follows:

\begin{thm}[Erd\H os and Szekeres]
Let $\lambda_n=\lfloor\sqrt{n-1}\rfloor+1$. Every permutation of length $n$ has a monotone subsequence of length $\lambda_n$, and there exist permutations of length $n$ without monotone subsequences of length $\lambda_n+1$. 
\end{thm}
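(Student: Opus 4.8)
The plan is to prove the two assertions separately: the first by the classical pigeonhole argument of Erd\H os and Szekeres (which also underlies the form with general $p,q$ recalled just above), and the second by an explicit extremal construction.

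\textbf{Existence of a long monotone subsequence.} I would set $m=\lfloor\sqrt{n-1}\rfloor$, so that $\lambda_n=m+1$ and $n\ge m^2+1$. Given a permutation $w$ of length $n$, assign to each position $i$ the pair $(a_i,b_i)$, where $a_i$ is the length of the longest increasing subsequence of $w$ ending at position $i$ and $b_i$ is the length of the longest decreasing subsequence ending at $i$. The crucial step is that the map $i\mapsto(a_i,b_i)$ is injective: for $i<j$, if $w(i)<w(j)$ then appending position $j$ to a longest increasing run ending at $i$ gives $a_j\ge a_i+1$, while if $w(i)>w(j)$ the same argument applied to decreasing runs gives $b_j\ge b_i+1$; in either case the two pairs differ. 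Now if $w$ had no monotone subsequence of length $\lambda_n=m+1$, then every $a_i$ and every $b_i$ would lie in $\{1,\dots,m\}$, so injectivity would force $n\le m^2\le n-1$, a contradiction. (Equivalently one can phrase this via Mirsky's theorem applied to the dominance order on the points $(i,w(i))$, or simply invoke the $p=q=m$ case of the classical statement recalled above.)

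\textbf{Tightness.} For the matching example I would use the standard block construction. Partition $\{1,\dots,(m+1)^2\}$ into consecutive blocks $B_1<B_2<\dots<B_{m+1}$ of size $m+1$, and form the permutation $u$ by concatenating $B_{m+1},B_m,\dots,B_1$, with each block listed in increasing order. Since $u$ is increasing inside each block and strictly decreasing across blocks, a longest increasing subsequence must stay within a single block and so has length $m+1$, while a longest decreasing subsequence can use at most one entry from each block and so has length at most $m+1$; thus $u$ has no monotone subsequence of length $m+2=\lambda_n+1$. Because $m=\lfloor\sqrt{n-1}\rfloor$ also gives $n\le(m+1)^2$, I would then delete $(m+1)^2-n$ of the symbols of $u$ and relabel to obtain a permutation of length $n$; deleting entries never lengthens a monotone subsequence, so this permutation still avoids monotone subsequences of length $\lambda_n+1$, as required.

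\textbf{Main obstacle.} The block construction and its verification are routine, as is the bookkeeping relating $m=\lfloor\sqrt{n-1}\rfloor$ to the range $m^2+1\le n\le(m+1)^2$ used in both halves. The one genuinely substantive point — and the step I would write most carefully — is the injectivity of the labeling $i\mapsto(a_i,b_i)$, which is the entire engine of Erd\H os--Szekeres and is exactly what makes the bound $n\le m^2$ collide with $n\ge m^2+1$ precisely at the threshold $\lambda_n$.
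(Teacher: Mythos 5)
Your proposal is correct, and it follows essentially the same route as the paper, which simply invokes the classical $p=q$ case of Erd\H os--Szekeres together with the standard extremal example and gives no further proof. You supply the details the paper omits — the injective labeling $i\mapsto(a_i,b_i)$ and the block construction with the bookkeeping $m^2+1\le n\le(m+1)^2$ — and both halves check out.
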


In the above theorem, we can think of $\lambda_n$ as the length of the longest forced monotone subsequence. We wish to generalize this theorem to Latin Squares by defining a corresponding variable, $\Lambda_n$.
 
\begin{defn}
Let $\Lambda_n$ be the largest integer such that every  $n^{th}$ order Latin Square has a row or column with a monotone subsequence of length $\Lambda_n$. 
\end{defn}

It is trivially true that $\Lambda_n\ge\lambda_n$, since there is guaranteed to be a monotone subsequence of length $\lambda_n$ in every row and column of a given  $n^{th}$ order Latin Square. In the next theorem we present a slight improvement of this bound.

\begin{thm}
If $n\ge(m-1)(m-2)+2$ for some integer $m$, then $\Lambda_n\ge m$.
\end{thm}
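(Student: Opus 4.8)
The plan is to argue by contradiction: assume $n\ge (m-1)(m-2)+2$ yet some $n^{\text{th}}$ order Latin square $S$ has no row and no column containing a monotone subsequence of length $m$. I want to derive a contradiction from a single, cleverly chosen row, together with the classical Erd\H os--Szekeres bound quoted above.

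First I would locate the entry $1$ lying in the first column of $S$; say it sits in row $r$, so $S[r,1]=1$. The reason for selecting \emph{this} occurrence of the symbol $1$ is that $1$ is then simultaneously the smallest symbol and the leftmost entry of row $r$: it sits at an extreme of row $r$ in both value and position. Now consider the sequence of $n-1$ distinct integers $S[r,2],S[r,3],\dots,S[r,n]$, each of which is at least $2$. The two observations to make are: (i) this sequence has no increasing subsequence of length $m-1$, because prepending the entry $S[r,1]=1$ to such a subsequence would give an increasing subsequence of length $m$ inside row $r$; and (ii) it has no decreasing subsequence of length $m$, since any such subsequence already lies in row $r$. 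Applying the Erd\H os--Szekeres theorem (in the $pq+1$ form stated above, with $p=m-2$ and $q=m-1$, after relabeling the $n-1$ entries to a permutation) yields $n-1\le (m-2)(m-1)$, that is $n\le (m-1)(m-2)+1$, contradicting the hypothesis. Hence no such Latin square exists, so every $n^{\text{th}}$ order Latin square with $n\ge(m-1)(m-2)+2$ has a row or column with a monotone subsequence of length $m$, i.e.\ $\Lambda_n\ge m$.

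I do not expect a serious technical obstacle here; the entire content is the right \emph{choice} of witness, and that is also the point that explains why this bound beats the trivial $\Lambda_n\ge\lambda_n$. Working with an arbitrary row only tells us there is no increasing subsequence of length $m$ and no decreasing one of length $m$, which merely recovers $n\le(m-1)^2$. Using the symbol $1$ but at an interior cell of its row forces one to treat the portions left and right of the $1$ separately, costing roughly a factor of two. Pinning the argument to the symbol $1$ precisely at a boundary cell is what lets one prepend it once and thereby shave one of the two Erd\H os--Szekeres parameters from $m-1$ down to $m-2$, which is exactly the improvement claimed. (One could equally well use the symbol $n$, or the first row in place of the first column, by the reverse/complement symmetries of Theorem~\ref{comprev}.)
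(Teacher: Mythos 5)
Your proof is correct and is essentially the paper's argument: the paper picks the row whose leftmost entry is $n$ and prepends that $n$ to a decreasing subsequence of the remaining $n-1$ entries, which is your construction after applying the complement symmetry you already note. The only cosmetic difference is that you phrase it as a contradiction while the paper argues directly.
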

\begin{proof}
Given an $n^{th}$ order Latin Square, consider the row whose leftmost entry is $n$. The $n-1$ entries to the right of this $n$ form a permutation of length at least $(m-1)(m-2)+1$. From the Erd\"os-Szekeres Result, we know that this permutation either has an increasing subsequence of length $m$ or a decreasing one of length $m-1$. In the latter case, combining the leftmost $n$ with this decreasing subsequence creates a decreasing subsequence of length $m$, so either way, a monotone sequence of length $m$ exists. Thus, $\Lambda_n\ge m$.
\end{proof}

Note that this proof could have been analogously applied to the column beginning with an $n$ or to the row and column beginning with a $1$, so that we are actually guaranteed four occurrences  of a monotone subsequence of length $m$. 

By inverting the formula in the condition of the previous theorem, we can express this result in terms of $\Lambda_n$. 
\begin{cor}\label{lb} For all $n>1$, 
$\Lambda_n\ge \left\lfloor \frac32 +\sqrt{n-\frac74}\right\rfloor.$
\end{cor}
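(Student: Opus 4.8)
The plan is to invert the inequality $n \ge (m-1)(m-2)+2$ appearing in the hypothesis of the previous theorem: I want to identify the largest integer $m$ for which it holds and then check that this largest integer is precisely $\bigl\lfloor \tfrac32 + \sqrt{n - \tfrac74}\bigr\rfloor$.

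First I would rewrite that hypothesis as a quadratic condition on $m$. The inequality $n \ge (m-1)(m-2)+2$ is equivalent to $m^2 - 3m + (4 - n) \le 0$, i.e.\ $m$ lies between the two roots of $x^2 - 3x + (4-n)$. The discriminant is $9 - 4(4-n) = 4n - 7$, which is positive for $n > 1$, and the larger root is $\tfrac32 + \sqrt{n - \tfrac74}$. The smaller root is $\tfrac32 - \sqrt{n - \tfrac74} \le \tfrac32 < 2$, so for an integer $m \ge 2$ the "between the roots" condition reduces to the single inequality $m \le \tfrac32 + \sqrt{n - \tfrac74}$.

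Next I would set $m := \bigl\lfloor \tfrac32 + \sqrt{n - \tfrac74}\bigr\rfloor$. For $n > 1$ one has $\tfrac32 + \sqrt{n - \tfrac74} \ge \tfrac32 + \tfrac12 = 2$, so $m \ge 2$ and in particular $m - \tfrac32 > 0$. Squaring the (valid) inequality $m - \tfrac32 \le \sqrt{n - \tfrac74}$ gives $m^2 - 3m + \tfrac94 \le n - \tfrac74$, which rearranges to $(m-1)(m-2) + 2 \le n$. Hence $m$ satisfies the hypothesis of the previous theorem, and applying that theorem yields $\Lambda_n \ge m = \bigl\lfloor \tfrac32 + \sqrt{n - \tfrac74}\bigr\rfloor$, which is the claim.

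This is essentially a routine algebraic inversion, so there is no serious obstacle; the only point needing a little care is the direction of the inequality when squaring, i.e.\ confirming $m - \tfrac32 \ge 0$ (equivalently, that the small cases $n=2,3,\dots$ behave), which is exactly what the bound $\tfrac32 + \sqrt{n - \tfrac74} \ge 2$ for $n>1$ provides.
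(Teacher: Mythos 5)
Your proposal is correct and is exactly the "inversion of the formula" that the paper invokes (the paper itself gives no further detail): you solve the quadratic condition $n\ge(m-1)(m-2)+2$ for $m$, verify that $m=\bigl\lfloor\tfrac32+\sqrt{n-\tfrac74}\bigr\rfloor\ge 2$ when $n>1$ so the squaring step is legitimate, and apply the preceding theorem. Nothing is missing.
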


We now show that this lower bound is tight for all perfect squares (except 1). The lower bound in these cases is given by
$$
\Lambda_{n^2}\ge\left\lfloor \frac32 +\sqrt{n^2-\frac74}\right\rfloor\ge\left\lfloor \frac32 +\sqrt{n^2-\frac{4n-1}4}\right\rfloor=n+1.
$$

We also have equality for these numbers, which was proved by Sam Connolly, a participant at the 2013 REU at East Tennessee State University. For $i,j\in \{1,2,\dots,n^2\}$, let $k_{ij}$ be in $\{1,2,\dots,n^2\}$ and satisfy $k_{ij}\equiv i+j-1$ (mod $n^2$). Consider the $n^2$-order Latin Square, where for $i,j\in \{1,2,\dots,n^2\}$, the $i,j$ entry is $k_{ij}n$ (mod $n^2+1$). For instance, when $n^2=9$, this produces the square in Figure \ref{connolly}. 

\begin{figure}
$$
\begin{array}{|c|c|c|c|c|c|c|c|c|}
\hline
3&6&9&2&5&8&1&4&7\\\hline
6&9&2&5&8&1&4&7&3\\\hline
9&2&5&8&1&4&7&3&6\\\hline
2&5&8&1&4&7&3&6&9\\\hline
5&8&1&4&7&3&6&9&2\\\hline
8&1&4&7&3&6&9&2&5\\\hline
1&4&7&3&6&9&2&5&8\\\hline
4&7&3&6&9&2&5&8&1\\\hline
7&3&6&9&2&5&8&1&4\\\hline
\end{array}
$$
\caption{Order 9 Latin Square whose longest monotone subsequence is 4.}\label{connolly}
\end{figure}

The first row and column is a classic example of a permutation of length $n^2$ whose longest monotone subsequence is of length $n$. The other rows and columns are cyclic permutations of the first, and this increases the length of this subsequence by only one. This example proves that $\Lambda_{n^2}\le n+1$. Combined with the lower bound, this shows $ \Lambda_{n^2}=n+1$.

Since one can show that $\Lambda_3=3$, our bound, that $\Lambda_3\ge2$, is not always tight. It would be true that $\Lambda_n$ was either equal to or one more than our lower bound if the conjecture below were true:

\begin{conj}
$\Lambda_{n}$ is nondecreasing. 
\end{conj}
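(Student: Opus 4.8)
The plan is to show that $\Lambda_{n+1} \ge \Lambda_n$ by taking an arbitrary $(n+1)$-th order Latin Square $S$ and extracting from it an $n$-th order structure to which the definition of $\Lambda_n$ can be applied. The naive attempt --- deleting a row and a column --- fails because the remaining entries do not form a Latin Square (some symbol appears $n$ times, some $n-1$ times). So the first step is to find the right deletion/reduction operation. The natural candidate is: delete the row, column, and symbol that all meet at a single cell. Concretely, pick any cell, say in row $r$, column $c$, containing symbol $k$; delete row $r$, delete column $c$, and then in the remaining $n\times n$ array replace every occurrence of symbol $k$ --- there are exactly $n$ of them, one in each surviving row and one in each surviving column, but the one in row $r$ and the one in column $c$ have been removed, leaving $n-1$ cells --- hmm, this does not immediately work either, since $k$ still appears $n-1$ times in an $n\times n$ array.

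So the cleaner operation to analyze is the standard one: in an $(n+1)\times(n+1)$ Latin square, delete the last row and last column; the resulting $n \times n$ array is a Latin square \emph{on symbols $\{1,\dots,n+1\}$ with one symbol missing from each row and column} --- in fact it is a symmetric situation. The key structural fact I would use is that deleting the last row and last column of a Latin square of order $n+1$, and then deleting all cells containing the symbol $S_{n+1,n+1}$, yields (after relabeling) something close to an order-$n$ Latin square, but with $n-1$ "holes." The holes are harmless for our purpose: a monotone subsequence in a row of the reduced square, read in order, is still a monotone subsequence in the original row of $S$. So the real content is: \emph{every} $n\times n$ sub-array obtained this way contains within its rows or columns a long monotone run, and we want to invoke $\Lambda_n$. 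For that we genuinely need a bona fide order-$n$ Latin square sitting inside (or mapping into) $S$.

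The step I expect to be the main obstacle is exactly this: producing an honest order-$n$ Latin square whose every row (resp. column) is a subsequence of a row (resp. column) of $S$. It is not clear such a sub-Latin-square always exists --- for instance, the cyclic Latin square of order $n+1$ has no order-$n$ Latin subsquare on a subset of its rows, columns, and symbols when $n+1$ is prime. This is presumably why the statement is a \emph{conjecture}: the obvious monotonicity argument does not go through, and the authors have verified $\Lambda_n$ for small $n$ (e.g. $\Lambda_3 = 3$, and $\Lambda_{n^2} = n+1$) but cannot rule out a dip. A promising weaker line of attack would be to combine the lower bound of Corollary \ref{lb} with an upper-bound construction: if one could show $\Lambda_n \le \lfloor \tfrac32 + \sqrt{n - \tfrac74}\rfloor + 1$ for all $n$ by exhibiting, for each $n$, a Latin square with no long monotone subsequence (generalizing the Connolly construction off perfect squares), then $\Lambda_n$ would be pinned to one of two consecutive values for every $n$, and monotonicity would follow from the fact that the lower bound $\lfloor \tfrac32 + \sqrt{n-\tfrac74}\rfloor$ is itself nondecreasing, provided the "which of the two values" behavior is also monotone --- a much smaller gap to close. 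Absent such a construction, I would present the conjecture as open and note that even the value $\Lambda_5$ or $\Lambda_6$ is not currently known, underscoring that no elementary embedding argument is available.
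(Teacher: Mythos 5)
The statement you were asked about is labeled a \emph{Conjecture} in the paper, and the paper gives no proof of it; the authors only remark that the trivial containment argument from permutations (delete the entry $n+1$) ``does not translate immediately to Latin Squares.'' Your proposal reaches exactly the same conclusion: you test the natural deletion/embedding strategies, explain why each fails, and correctly decline to claim a proof. So there is no gap to report relative to the paper --- you have not proved the statement, but neither have the authors, and your diagnosis of \emph{why} the obvious route fails is consistent with (and more detailed than) their one-sentence remark.

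One sharpening worth noting: the obstacle you flag as ``the main obstacle'' --- finding an honest order-$n$ Latin subsquare inside an order-$(n+1)$ Latin square --- is not merely difficult in special cases such as cyclic squares of prime order; it is impossible in general. A standard fact about Latin squares is that any proper subsquare of an order-$m$ Latin square has order at most $m/2$ (the symbols of the subsquare cannot reappear in the rows of the subsquare outside its columns without exceeding the available columns). Hence for $n\ge 2$ no order-$(n+1)$ Latin square contains an order-$n$ subsquare, and the embedding approach is dead on arrival, not just unavailable for some squares. Your fallback idea --- pinning $\Lambda_n$ between the lower bound of Corollary \ref{lb} and that bound plus one via a construction generalizing Connolly's off perfect squares --- is a reasonable program, but as you note it would still leave the monotonicity of the ``which of the two values'' choice unresolved, so the conjecture would remain open even then.
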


The corresponding result is trivial for permutations, since a permutation of length $n+1$ naturally contains one of length $n$ by removing the $n+1$ entry. A similar containment argument does not translate immediately to  Latin Squares. 

\section{Open Questions}

We would like to end with a discussion of several open problems which we believe may spur future investigations.  Again, many of these problems were provided by attendees of the $11^{th}$ Permutation Patterns conference. We are unfortunately unable to remember all their names. Answering some of these questions would  lead to the beginning of a rich theory of Pattern Avoidance in Latin Squares.

\vspace{5mm}

\noindent{\bf Open Problems}	
\begin{itemize}

  \item What is $L_n(\pi)$ for patterns of length $4$ or more?  In particular, what can be said when $\pi = 1234$?
  
\item For a fixed pattern, say $\pi = 123...m$, can anything be said about the growth rate of $L_n(\pi)$? For which value of $m$ does the count first become exponential? 
\item Can anything be said about the growth rate of $L_n(\pi)$ vs $L_n(\pi')$, where $\pi$ and $\pi'$ are respectively patterns of length $i$ and $i+1$ and  $n\gg i$?
\item Which patterns are the easiest to avoid in Latin Squares?  The hardest?  If $\pi$ and $\pi'$ are of the same length, how different can $L_n (\pi)$ and $L_n(\pi ')$ be?  
\item Are there any Wilf-equivalences outside of those mentioned in Theorem \ref{comprev}?

\item What happens when pattern avoidance is defined to require the permutations induced by each symbol (see the footnote in Section 2) to avoid the target pattern as well? 

\item Can anything be said about Latin Squares avoiding a specific pattern (or set of patterns) in the rows, and a different pattern (or set of patterns) in the columns? For example, to avoid $123$ in the columns and $321$ in the rows, we can take a 123 avoiding square and reflect it through the vertical axis. This means that there are $n$ Latin Squares with this structure.  Can something more interesting be said using larger patterns?

\item Instead of avoidance or containment of specific patterns, can we build Latin Squares using a set of permutations in a (set of) avoidance classes?  How many can be built?

\item Is there a closed form expression for $L_n(\pi_n)$?  Such an expression, or even bounds, could, with Theorem 8, be used to find better bounds on $L_n.$

\end{itemize}

We end with a final unexplored generalization leading to additional open questions. In what follows, we define a \emph{Latin Rectangle} to be any rectangular array with entries in $1,\dots,n$ and no repeats in any row or column. The usual definition of a Latin Rectangle requires the number of columns to be $n$; in what follows, we wish to examine ``sub-rectangles'' induced by choosing any $p$ rows and $q$ columns of a Latin Square, and these subrectangles will not always fit the traditional definition. Call two Latin Rectangles \emph{order isomorphic} if one can be obtained from the other by applying an increasing function $f$ to each entry. For a Latin Rectangle, $R$, we say a Latin Square \emph{contains the pattern $R$} if it has some sub-rectangle which is order isomorphic to $R$. For example, consider the Latin Square in Figure \ref{connolly}. The subrectangle at rows $2$ and $7$, and columns $1,5$ and $9$, is shown below.
\[
\begin{array}{|c|c|c|}\hline
6&8&3\\\hline
1&6&8\\\hline
\end{array}
\]
This is order isomorphic to the rectangle,
\[R=
\begin{array}{|c|c|c|}\hline
3&4&2\\\hline
1&3&4\\\hline
\end{array}
\]
so we say the original square contains the pattern $R$.

Every problem addressed in the paper can be expressed in terms of rectangular patterns. For example, enumerating $L_n(123)$ is equivalent to counting Latin Squares which avoid both 
$R=\begin{array}{|c|c|c|}\hline
1&2&3\\\hline
\end{array}$
and the $90^\circ$ clockwise rotation of $R$. Many questions about permutation patterns generalize to rectangular patterns, so that there is a great deal of research that can be done in this area. 

\vspace{3mm}

\section{Acknowledgements}

The research of both authors was supported by NSF REU grant 1004624.  We thank Anant Godbole, Project Director, and the other participants for useful discussions.  We particularly thank Dan Daly for the data he provided, initially proposing the question of pattern avoidance in Latin Squares, and supporting us while we worked on the problem.  Many of the open questions were raised by participants at the 11th International Permutation Patterns Conference in Paris.  Finally, we thank our anonymous referees for their valuable feedback.

\bibliographystyle{plain}
\bibliography{biblio}

\end{document}